\newtheorem{theorem}{Theorem}
\newtheorem{proposition}[theorem]{Proposition}
\newtheorem{lemma}[theorem]{Lemma}
\newtheorem{remark}[theorem]{Remark}
\newtheorem{example}[theorem]{Example}
\DeclareMathOperator{\card}{card}
\def\NN{\mathbb{N}}
\def\A{\mathcal{A}}
\def\C{\mathcal{C}}
\def\F{\mathcal{F}}
\begin{document}
\title[Hitting and returning into rare events]{Hitting and returning into rare events for all alpha-mixing processes}
\author{Miguel Abadi}
\email{miguel@ime.unicamp.br}
\address{Univ. Campinas, SP, Brazil}
\author{Benoit Saussol}
\email{benoit.saussol@univ-brest.fr}
\address{Univ. Bretagne Occidentale, Brest, France}
\begin{abstract}
We prove that for any $\alpha$-mixing stationnary process the hitting time of any $n$-string $A_n$ converges, when suitably normalized, to an exponential law. We identify the normalization constant $\lambda(A_n)$. A similar statement holds also for the return time.

To establish this result we prove two other results of independent interest.
First, we show a relation between the rescaled hitting time and the rescaled return time, generalizing a theorem by Haydn, Lacroix and Vaienti.
Second, we show that for positive entropy systems, the probability of observing any $n$-string in $n$ consecutive observations, goes to zero as $n$ goes to infinity.
\end{abstract}

\thanks{This work was part of the CAPES-COFECUB program Ma 545/07 which the authors greatly acknowledge.}
\maketitle

\section{Introduction}

The study of the statistical properties of the time elapsed until the occurrence of an observable of positive measure 
in a stationnary stochastic process and/or in a measure preserving dynamical system is a classical subject.
The starting point of this study is the famous Poincar\'e Recurrence Theorem who states that in an ergodic system, 
any set of positive measure appears in the process infinitely many times.
This is a qualitative result in the sense that no statistical properties of these returns are established.
In the last twenty years many notions of return were introduced and studied.
These notions depend on the initial conditions, the observed set, and on the measure of the system.
There was an intensive interest to study their statistical properties
to model physical phenomena like intermittence and metastability. Then, the applications  were extended to 
other areas such biology, linguistic and computer science to describe phenomena like
gene occurrence in a DNA and protein sequences, rhythm of a language and data compression algorithms, to mention
some of them.

In the present paper we consider a fixed set $A$ of positive measure $\mu(A)$ in an ergodic system.
When the evolution starts outside $A$, the time elapsed until the first occurrence of the set, is referred as the hitting time of $A$.
When the evolution starts inside $A$, the time is referred as the return time to $A$.

Our main result is that under the so called $\alpha$ or strongly mixing condition,
the distribution of the hitting time of a set $A$ can be well approximated by an exponential law.
The approximation is in the supremum norm in the space of distribution functions.
Although the exponential law is a classical subject our result is new and interesting:\\
a) Our results holds for \emph{any} cylinder set, namely, around any point, including periodic points and not just around generic points.\\
b) The result holds for any $\alpha$-mixing systems, while the best previous works \cite{aba06} assumed a polynomial rate of at least $(1+\sqrt{5})/2$.
Moreover, this strong-mixing condition is the weakest among many types of mixing conditions, among them $\psi, \phi, \rho, \beta$ or absolutely regular, $I$ or information regularly. See Bradley \cite{bra}.
\\
c) We also show that the exponential law holds when considering not just a cylinder set but even a set which is a union
of cylinders. Moreover, the cardinal of this union can be exponentially large, with respect to the length of the cylinders. 

Following the Galves and Schmitt \cite{gs} approach we get that the parameter of the exponential law is the product
$\lambda(A)\mu(A)$, where $\lambda(A)$ is a positive number related to the short recurrence properties of the set $A$.
For a description of these properties see Abadi \cite{aba06}.
In the aforementioned paper, the authors show that for $\psi$-mixing systems, there exist two positive constants $K, K'$ 
such that $K \le \lambda(A)\le K'$.
In our case, the constant $K$ does not exist, and one can have $\lambda(A)$ arbitrarily small. 

We prove our result by showing other two results which are interesting by themselves.
In the first one, we establish an ergodic relationship between the re-scaled hitting time $\lambda(A)\mu(A)\tau_A$
and the equally re-scaled return time.
The idea of this result comes from a paper of Haydn, Lacroix and Vaienti \cite{hlv}, which established
such a relationship for the rescaled $\mu(A)\tau_A$ hitting time and return time.
This in general does not apply in our case since one can have $\lambda(A)\not=1$, for instance, around periodic points.
The proof follows even a different approach.

The second result we mentioned above read as follows. The probability of observing an $n$-cylinder, or even a union of them,
in $n$ consecutive observations, goes to zero with $n$ for $\alpha$-mixing systems. Moreover, we show that the convergence is uniform on $A$.
It only depends on the cardinality of the union, but not in the choice of the cylinders.
This is natural when the measure of the set decays e.g. exponentially with $n$. But is far from obvious and maybe even anti-intuitive, when the measure decays just polynomially fast with power less than one, as it is covered by our case.

\section{Statement of the results}

Let $\A$ be a finite or countable set and let $\Sigma=\A^\NN$ be the set of sequences.
We endow $\Sigma$ with the shift map $T$. Given non negative integers $m\le n$ and a point $x\in \Sigma$ we denote by $[x_m\ldots x_n]$ the cylinder of rank $(m,n)$ containing $x$, that is 
\[
[x_m\ldots x_n] := \{y\in\Sigma\colon y_m=x_m,\ldots,y_n=x_n\}.
\]
A cylinder of rank $(0,n-1)$ will be simply called of rank $n$.
We denote by $\C_m^n$ the collection of cylinders of rank $(m,n)$ and by
$\F_m^n$ the $\sigma$-algebra generated by the partition $\C_m^n$. 
Let $\F$ be the $\sigma$-algebra generated by the $\F_m^n$'s and $\mu$ be a $T$-invariant probability measure on $(\Sigma,\F)$. 
Let 
\[
\alpha(g) =\sup_{m, n}\sup_{A\in\F_0^n,B\in\F_{n+g}^{m+g}}\left|\mu(A\cap B)-\mu(A)\mu(B)\right|
\]
for any integer $g$. We assume that the system $(\Sigma,T,\mu)$ is $\alpha$-mixing, in the sense that $\alpha(g)\to0$ as $g\to\infty$. 
This is the weakest notion of mixing among $\phi$ and $\psi$-mixing. We emphasize that we do not assume any summability condition on the sequence $\alpha(g)$.

Let $A\in\Sigma$ be a measurable set. We define the hitting time to $A$ by 
\[
\tau_A(x)=\inf\{k\ge1\colon T^kx\in A\}, \quad x\in\Sigma.
\]
We are interested in the distribution of the hitting time $\tau_A$ on the probability space $(\Sigma,\mu)$, and the return time, defined with the same formula but on the probability space $(A,\mu(\cdot|A))$ where $\mu(\cdot|A)$ denotes the conditional measure on $A$.

\begin{theorem}\label{hitting}
Suppose that the system $(\Sigma,T,\mu)$ is $\alpha$-mixing. 
Then for any sequence $A_n\in\F_0^{n-1}$ such that 
\begin{equation}\label{eq:tauzero}
\mu(\tau_{A_n}\le n)\to0\quad \text{as }n\to\infty,
\end{equation}
there exists some normalizing constant $\lambda(A_n)>0$ such that the following holds:

\begin{itemize}
\item
the hitting time to $A_n$, rescaled by $\lambda(A_n)\mu(A_n)$, converges in distribution to an exponential distribution. Namely, 
\[
\sup_{t\ge0} \left|\mu(\lambda(A_n)\mu(A_n)\tau_{A_n}>t)-\exp(-t)\right|\to 0
\text{ as } n\to\infty.
\]
The convergence is uniform on families of sets $A_n$ where the convergence in \eqref{eq:tauzero} is uniform.
\item
the distribution of the return time is approximated by a convex combination of a Dirac mass at zero and an exponential distribution. More precisely,
\[
\sup_{t\ge s}\left|\lambda(A_n)^{-1}\mu(\lambda(A_n)\mu(A_n)\tau_{A_n}> t|A_n)-\exp(-t)\right|\to0
\text{ as } n\to\infty,
\]
for any $s>0$.
\item 
we have $\limsup\lambda(A_n)\le1$.
\end{itemize}
\end{theorem}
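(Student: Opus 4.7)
My strategy is to follow a Galves--Schmitt blocking scheme, adapted to the absence of any quantitative decay of $\alpha(g)$ and to the possibility that $A_n$ clusters around near-periodic points. I would pick a (slowly growing) mixing scale $g_n$ with $\alpha(g_n)\to 0$, and a blocking scale $\ell_n$ with $g_n\ll \ell_n\ll 1/\mu(A_n)$. Following the short-recurrence philosophy of Abadi's earlier works, the normalizing factor would be defined as a conditional short-return probability, roughly
\[
\lambda(A_n):=\mu(\tau_{A_n}>t_n\mid A_n)
\]
for an appropriate threshold $t_n$ in the range $[g_n,1/\mu(A_n))$. Hypothesis \eqref{eq:tauzero} ensures that once $t_n\ge n$ the precise choice of $t_n$ does not affect the limit, since it controls the contribution of orbits that fall back into $A_n$ within the short-return window.

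The core of the argument is to iterate the elementary identity
\[
\mu(\tau_{A_n}>t+s)=\mu(\tau_{A_n}>t)-\mu(t<\tau_{A_n}\le t+s),
\]
controlling the second term by inserting an $\alpha$-mixing gap of length $g_n$ after time $t$. Schematically,
\[
\mu(t<\tau_{A_n}\le t+s)=\mu(\tau_{A_n}>t)\,\mu(\tau_{A_n}\le s)+O(\alpha(g_n)),
\]
and iterating across $\lfloor t/\ell_n\rfloor$ consecutive blocks of length $\ell_n$ yields
\[
\mu(\tau_{A_n}>t)=\bigl(1-\lambda(A_n)\mu(A_n)\ell_n+o(1)\bigr)^{t/\ell_n},
\]
which tends to $\exp(-\lambda(A_n)\mu(A_n)t)$ as soon as the cumulative error is negligible. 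The required error control comes from $\mu(\tau_{A_n}\le n)\to 0$, and uniformity of this convergence propagates directly to uniformity of the final limit.

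For the second bullet I would invoke the generalized Haydn--Lacroix--Vaienti correspondence between rescaled hitting and rescaled return times announced in the introduction as the first auxiliary result: once the exponential law is established under $\mu$, that correspondence transfers it to the conditional measure $\mu(\cdot\mid A_n)$, with the atomic part at zero (of total mass $1-\lambda(A_n)$) absorbing orbits that return to $A_n$ before escaping the short-recurrence window of size $n$. The bound $\limsup\lambda(A_n)\le 1$ is then essentially Kac's lemma: since $\int_{A_n}\tau_{A_n}\,d\mu=1$, rescaling by $\mu(A_n)$ and applying Fatou to the return time forces $\lambda(A_n)^{-1}\ge 1-o(1)$.

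The main obstacle is that, without any summability of $\alpha(g)$, the gap $g_n$ must be chosen by a slow diagonal procedure based only on $\alpha(g_n)\to 0$; one must simultaneously arrange $g_n/\ell_n\to 0$, $\ell_n\mu(A_n)\to 0$, and $\alpha(g_n)/(\ell_n\mu(A_n))\to 0$, and all three must balance with the short-return scale $n$ coming from \eqref{eq:tauzero}. Hypothesis \eqref{eq:tauzero} is precisely what allows the window $[0,n]$ to be absorbed into the definition of $\lambda(A_n)$ rather than spoiling the blocking, and making this choice work \emph{uniformly} over families of $A_n$ with uniform convergence in \eqref{eq:tauzero} is the most delicate part of the argument.
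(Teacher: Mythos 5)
Your overall architecture --- a Galves--Schmitt blocking scheme for the hitting time, the generalized Haydn--Lacroix--Vaienti correspondence for the return-time statement, and a Kac-type argument for $\limsup\lambda(A_n)\le1$ --- matches the paper's in outline, and the last two ingredients are sound (the paper gets the third bullet instead from its explicit formula for $\lambda$, but your Kac argument can be completed using the integral relation between the limits of $F_{A_n}$ and $G_{A_n}$). The genuine gap is in the central step. First, the normalizer: you set $\lambda(A_n)=\mu(\tau_{A_n}>t_n\mid A_n)$, a \emph{conditional} quantity. The blocking identity naturally produces the unconditional per-block decrement $\mu(\tau_{A_n}\le \ell_n-g_n)$, and identifying this with $\lambda(A_n)\mu(A_n)\ell_n$ for your $\lambda$ amounts to showing that the Ces\`aro average $\frac1L\sum_{j\le L}\mu(\tau_{A_n}\ge j\mid A_n)$ stabilizes at its value at $j=t_n$, i.e.\ that $\mu(t_n<\tau_{A_n}\le \ell_n\mid A_n)$ is negligible. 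Under bare $\alpha$-mixing such conditional estimates are unavailable (the mixing error becomes $\alpha(g)/\mu(A_n)$, which is uncontrolled) and are essentially part of the conclusion, so the definition is circular. The paper avoids this entirely by taking $\lambda(A)=-\ln\mu(\tau_A>s-2n)/(s\mu(A))$, read off from the unconditional survival function itself.

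Second, the error accumulation. With roughly $t/(\lambda(A_n)\mu(A_n)\ell_n)\to\infty$ blocks, each carrying an error of at least $\alpha(g_n)+\mu(A_n)$, the crude sum of per-block errors diverges, and your balancing condition $\alpha(g_n)/(\ell_n\mu(A_n))\to0$ uses the wrong normalization: the true per-block main term is $\mu(\tau_{A_n}\le \ell_n-g_n)$, which can be smaller than $\ell_n\mu(A_n)$ by an arbitrary factor --- that factor is essentially $\lambda(A_n)$, and the theorem is precisely about the regime where it degenerates. The paper's resolution, which your sketch is missing, has two pieces: (i) the telescoping errors are summed against the geometric weights $\mu(\tau_{A_n}>s-2n)^{q-j}$, so the \emph{total} error is $(\mu(\tau_{A_n}\le 2n)+\alpha(n))/\mu(\tau_{A_n}\le s-2n)$, independent of the number of blocks; and (ii) the block length $s$ is chosen adaptively (Lemma~\ref{lem:scale}) as the first time $\mu(\tau_{A_n}\le s-2n)$ exceeds $\sqrt{2\mu(\tau_{A_n}\le n)+\alpha(n)}$, which makes that ratio at most the square root of the small quantity while keeping $\mu(\tau_{A_n}\le s)$ small. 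In other words, the scales cannot be chosen by a diagonal procedure from $\alpha$ and $\mu(A_n)$ alone; they must be extracted from the hitting-time function of $A_n$ itself, and this is exactly what makes the proof work without any rate on $\alpha$ and uniformly in $A_n$.
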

The normalizing constant $\lambda(A_n)$ may not converge in general, thus we cannot simply say that the limiting distribution of the rescaled return time exists.
Moreover, even if it converges the limit may not be equal to one. For example a case of interest is when $\lim\lambda(A_n)=0$ where we still get a non-trivial exponential approximation, while without the extra factor $\lambda(A_n)$ one would just obtain the rough statement that the rescaled hitting time $\mu(A_n)\tau_{A_n}\to +\infty$ and the rescaled return time $\mu(A_n)\tau_{A_n}\to0$ in distribution.

In the next section we show that the hypothesis in the theorem holds for a broad class of sequences of sets $A_n$.

\section{Rare events do not appear too soon}

We present some explicit examples of sequences $A_n$ under which Theorem~\ref{hitting} applies, that is when the condition~\eqref{eq:tauzero} of the theorem is satisfied. They are consequences of Proposition~\ref{pro:tauAn} presented below.

The first example was the motivation of our work:
\begin{example}\label{ex:cyl}
For any $a\in\A^\NN$, the sequence of cylinders $A_n=[a_0,\ldots,a_{n-1}]$ satisfies the hypothesis of Theorem~\ref{hitting}. Moreover, the convergence is uniform on $a$.
\end{example}
We emphasize that this approximation with an exponential distribution is valid for any point $a\in\Sigma$, including for example periodic points. This generalizes the result in \cite{gk} which concern a.e. sequence $a$.

Returns to the cylinder $[a_0,\ldots,a_{n-1}]$ in the example above means that there is a perfect matching of the first $n$ symbols. It turns out that for some applications the approximate matching is more interesting:

\begin{example} Approximate matching:
Let $a\in\A^\NN$ and $D\in(0,1)$. Denote for $b\in\Sigma$ by $d_n(a,b)=\card\{i\le n-1\colon a_i\neq b_i\}$ the Hamming distance of the first $n$ symbols. Let 
\[
A_n= \{b\in\Sigma\colon d_n(a,b)\le Dn\},
\]
be the $D\%$ approximate matching of $[a_0,\ldots,a_{n-1}]$. Then there exists $D_0>0$ such that for all $D\in (0,D_0)$, the sequence $A_n$ satisfies the hypothesis of Theorem~\ref{hitting}.

In DNA sequence analysis the alphabet $\A$ is $\{A,C,G,T\}$. For some sequences the entropy is lower estimated by $1.7$ bits per symbol (for example the human gene {\sc Humretblas}; see \cite{acgt}), which means $h_\mu=1.7\ln2$ . This gives a value of $D_0\approx 41\%$.
\end{example}

\begin{proof}
We count the number $\kappa_n$ of cylinders of rank $n$ which compose the $D\%$ approximate matching $A_n$. We have 
\[
\begin{split}
\kappa_n 
&\le \sum_{k=0}^{Dn} {n \choose k} (\card\A-1)^k \\
&\le D^{-Dn}\sum_{k=0}^{n} {n \choose k} D^k(\card\A-1)^k \\
&= \left(\frac{1+D(\card\A-1)}{D^D}\right)^n
\end{split}
\]
We choose $D_0>0$ as the smallest solution of $(1+D(\card\A-1))/D^D=e^{h_\mu(T)}$ and then Proposition~\ref{pro:tauAn} applies for any $D<D_0$.
\end{proof}

\begin{example}\label{ex:topo}
For a set $K\subset \Sigma$ define its topological entropy by
\[
h_{\mathrm{top}}(K)=\limsup_{n\to\infty}\frac1n \log \#\{C\colon C\text{ cylinder of rank $n$ s.t. } C\cap K\neq\emptyset\}.
\]
Denote by $\F_0^{n-1}(K)$ the union of those cylinders $C$ of rank $n$ such that $K\cap C\neq\emptyset$.
The sequence $A_n=\F_0^{n-1}(K)$, under the assumption that $h_{\mathrm{top}}(K)<h_\mu$, satisfies the hypothesis of Theorem~\ref{hitting}.
\end{example}

\begin{example}
Suppose that $A_n=A_n^0\cup A_n^1$ where $A_n^0$ and $A_n^1$ are $\F_0^{n-1}$ measurable sets and such that $\lim n\mu(A_n^0)=0$ and $A_n^1$ satisfies the conditions of Example~\ref{ex:topo} above. 
Then, we have
\[
\mu(\tau_{A_n}\le n)\le \mu(\tau_{A_n^0}\le n)+\mu(\tau_{A_n^1}\le n)\le n\mu(A_n^0)+\mu(\tau_{A_n^1}\le n)\to0,
\]
therefore the hypothesis of Theorem~\ref{hitting} is satisfied.
\end{example}
We emphasize that, in this example, the exponential growth of the number of $n$-cylinders inside $A_n$ is not a priori bounded by the entropy of the measure, contrary to the preceding example. 

\begin{proposition}\label{pro:tauAn}
Suppose that $(\Sigma,T,\mu)$ is an ergodic measure preserving system, not necessarily $\alpha$-mixing.
Let $(\kappa_n)$ be a sequence of integers such that 
\[
\limsup_n\frac1n\log \kappa_n<h_\mu(T).
\] 
Then there exists a sequence $\epsilon_n\to0$ such that, for any $A_n\in\F_0^{n-1}$ which is the union of at most $\kappa_n$ cylinders of rank $n$ we have 
\[
\mu(\tau_{A_n}\le n)\le\epsilon_n.
\]
\end{proposition}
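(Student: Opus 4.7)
The plan is to exploit the Shannon--McMillan--Breiman (SMB) theorem to split the $\kappa_n$ constituent cylinders of $A_n$ into those of ``typical'' size and those that are ``atypically large'', and to bound each contribution separately. Let $h=h_\mu(T)$ and pick $\eta>0$ with $\limsup_n \tfrac{1}{n}\log\kappa_n<h-2\eta$. By SMB one has $-\tfrac{1}{n}\log\mu([x_0,\ldots,x_{n-1}])\to h$ for $\mu$-a.e.\ $x$, so Egorov's theorem provides, for any $\delta>0$, a set $E\subset\Sigma$ with $\mu(E)\ge 1-\delta$ and an integer $N_0$ such that $\mu([x_0,\ldots,x_{n-1}])\le e^{-n(h-\eta)}$ whenever $x\in E$ and $n\ge N_0$. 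Hence for $n\ge N_0$ every rank-$n$ cylinder is either \emph{good} (of measure $\le e^{-n(h-\eta)}$) or \emph{bad} (contained in $\Sigma\setminus E$), and one partitions $A_n=A_n^{\mathrm g}\cup A_n^{\mathrm b}$ accordingly.

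The good part is handled immediately by stationarity and the union bound:
\[
\mu(\tau_{A_n^{\mathrm g}}\le n)\le n\mu(A_n^{\mathrm g})\le n\kappa_n e^{-n(h-\eta)},
\]
which decays exponentially in $n$ by our choice of $\eta$. The bad part is where the real work lies. Since $A_n^{\mathrm b}\subset\Sigma\setminus E$ we have $\mu(A_n^{\mathrm b})\le\delta$, but this bound is independent of $n$ and multiplying it by $n$ is useless. The natural fix is to let $\delta$ depend on $n$, for instance by replacing $E$ with
\[
E^{(n)} := \bigl\{x\colon \mu([x_0,\ldots,x_{m-1}])\le e^{-m(h-\eta)} \text{ for every } m\ge n\bigr\},
\]
whose complement has measure $\delta_n\to 0$ by SMB; then $A_n^{\mathrm b}\subset\Sigma\setminus E^{(n)}$.

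The main obstacle is then to beat the naive $n\delta_n$ estimate and prove $\mu(\tau_{A_n^{\mathrm b}}\le n)\to 0$ without the spurious factor of $n$. I plan to combine two ingredients: the Ornstein--Weiss theorem, which gives that the first return $R_n(x)$ of $x$ to its rank-$n$ cylinder satisfies $R_n(x)\ge e^{n(h-\eta)}\gg n$ eventually $\mu$-a.e., killing any ``self-return'' contribution; and a cylinder-by-cylinder accounting on $A_n^{\mathrm b}$ exploiting the hypothesis $\kappa_n\ll e^{nh}$---the set $A_n^{\mathrm b}$ is a prescribed union of at most $\kappa_n$ cylinders rather than all of $\Sigma\setminus E^{(n)}$, so distinct-cylinder contributions to the hitting event can be decomposed and each estimated by its own measure. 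Turning these heuristics into an effective estimate, uniform over the choice of $A_n$, is the technical heart of the argument.
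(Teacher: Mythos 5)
Your treatment of the ``good'' cylinders is fine, but the proof is not complete: you yourself flag that the bad part is ``the technical heart of the argument'' and leave it open, and the two ingredients you propose to close it do not work. The Ornstein--Weiss theorem actually points in the wrong direction: if the return time to a set $B$ exceeds $n$, then the sets $T^{-1}B,\ldots,T^{-n}B$ are pairwise disjoint and $\mu(\tau_B\le n)=n\mu(B)$ \emph{exactly}, so a lower bound on return times makes the union bound sharp rather than improvable. Likewise, a cylinder-by-cylinder accounting $\mu(\tau_{A_n^{\mathrm b}}\le n)\le\sum_C\mu(\tau_C\le n)\le\sum_C n\mu(C)=n\mu(A_n^{\mathrm b})$ just reproduces the estimate you are trying to beat; to do better per cylinder you would need strong overlaps among the preimages $T^{-j}C$, i.e.\ \emph{short} returns, which is the opposite of what Ornstein--Weiss gives. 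Finally, Ornstein--Weiss is itself only an a.e.\ statement, so invoking it quantitatively reintroduces an exceptional set of measure $\delta_n'\to0$ at an uncontrolled rate, which is the very problem you started with.

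The paper's proof removes the factor $n$ from the bad contribution by a different device, which you may want to compare with. Fix $k$ with $\limsup\frac1n\log\kappa_n<(1-\frac1k)h$ for some $h<h_\mu(T)$, and set $m=\lceil n/k\rceil$. By stationarity, $\mu(\tau_{A_n}\le n)\le k\,\mu(\tau_{A_n}\le m)$ (cover $\{1,\ldots,km\}$ by $k$ blocks of length $m$), so the time window is shortened to $m$ at the cost of a \emph{constant} factor $k$. Then $\{\tau_{A_n}\le m\}\subset T^{-m}U_n$ where $U_n=\bigcup_{j=0}^{m-1}T^jA_n$ is a \emph{single} set, contained in a union of at most $m\kappa_n$ cylinders of rank $n-m$. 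One now splits the \emph{points} of $U_n$ (not the cylinders of $A_n$) according to the SMB set $\Gamma(n-m)=\{x\colon\mu([x_0\ldots x_{\ell-1}])\le e^{-\ell h}\ \forall \ell\ge n-m\}$: the part inside contributes at most $m\kappa_n e^{-(n-m)h}\to0$ (the factor $m$ appears only here, where the exponential beats it), and the part outside contributes at most $1-\mu(\Gamma(n-m))\to0$ with no factor of $n$ or $m$ at all. Multiplying by $k$ gives a bound $\epsilon_n\to0$ depending only on $\kappa_n$, hence uniform in $A_n$. The key point your decomposition misses is precisely this collapsing of the time window into the single set $U_n$, so that the atypical set is charged only once.
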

We emphasize that the bound $\epsilon_n$ does not depend on the particular set $A_n$ but only on the number of cylinders which compose it. Note that the statement $\mu(\tau_{A_n}\le n)\to0$ is trivial whenever $\mu(A_n)\ll 1/n$. However, even for  $\alpha$-mixing systems, there can exist some cylinders $A_n$ of rank $n$ such that $\mu(A_n)\gg 1/n$ (See~\cite{liliam}).

When the system is $\alpha$-mixing, the measure preserving transformation $(T,\mu)$ is an exact endomorphism and in particular, its entropy $h_\mu(T)$ is positive (we refer to \cite{CFS} for details). In particular Proposition~\ref{pro:tauAn} applies under the mixing hypotheses of Theorem~\ref{hitting}.

\begin{proof}

Set $h_0:=\limsup_n\frac1n\log \kappa_n$ and let $h\in(h_0,h_\mu(T))$ and $k\in\NN$ such that $h_0<(1-1/k)h$.
Let 
\[
\Gamma(N)=\{x\colon \forall n\ge N, \mu([x_0\ldots x_{n-1}])\le e^{-nh}\}.
\]
By the Shannon-McMillan-Breiman theorem $\mu(\Gamma(N))\to1$ as $N\to\infty$.
Given an integer $n$, let $m=\lceil n/k \rceil$ be the smallest integer such that $km\ge n$.
First, observe that by invariance we have
\begin{equation}\label{eq:An}
\mu(\tau_{A_n}\le n)\le k \mu(\tau_{A_n}\le m).
\end{equation}
Let $\displaystyle U_n=\bigcup_{j=0}^{m-1}T^jA_n$. We have $\{\tau_{A_n}\le m\}\subset T^{-m}U_n$,
hence 
\begin{equation}\label{eq:Un}
\mu(\tau_{A_n}\le m) \le \mu(U_n).
\end{equation}
Moreover, since each $T^j A_n$ is contained in an union of at most $\kappa_n$ cylinders of rank $n-j$, the set $U_n$ is contained in at most $m\kappa_n$ cylinders of rank $n-m$
therefore 
\[
\mu(U_n\cap \Gamma(n-m))\le m \kappa_n e^{-(n-m)h}.
\]
On the other hand,
\[
\mu(U_n\setminus \Gamma(n-m))\le 1-\mu(\Gamma(n-m)).
\]
Setting $\epsilon_n$ equal to $k$ times the sum of the last two upper bounds proves the proposition in view of~\eqref{eq:An} and~\eqref{eq:Un}.
\end{proof}

\section{Proof of the main theorem}

Our main theorem will be a direct application of this explicit estimation of the difference between the hitting time statistics and the exponential distribution.

\begin{theorem}\label{pro:main}
Suppose that the system $(\Sigma,T,\mu)$ is $\alpha$-mixing.
Let $n$ be an integer. For any $A\in\F_0^{n-1}$  there exists some constant $\lambda(A)\in(0,2]$ such that 
\[
\sup_{k\in\NN}\left|\mu(\tau_A>k)-e^{-\lambda(A)\mu(A)k}\right|\le 12\sqrt{2\mu(\tau_{A}\le n)+\alpha(n)}.
\]
\end{theorem}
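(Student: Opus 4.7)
The plan is a block decomposition. Write $g(k):=\mu(\tau_A>k)$ and $\delta:=2\mu(\tau_A\le n)+\alpha(n)$. My approach is to establish an approximate multiplicativity for $g$, use it to define $\lambda(A)$ at a well-chosen reference scale, and propagate the exponential fit to every $k$ via iteration and interpolation, tuning the scale to give the $\sqrt{\delta}$ rate. The first key lemma I would prove is that for every $s,t\ge 0$,
\[
|g(s+t)-g(s)g(t)|\le \delta.
\]
One starts from $\{\tau_A>s+t\}=\{\tau_A>s\}\cap T^{-s}\{\tau_A>t\}$ and inserts a buffer of length $n$ to create the gap needed for $\alpha$-mixing: comparing with $\{\tau_A>s\}\cap T^{-(s+n)}\{\tau_A>t\}$, the discrepancy is bounded by $\mu(\tau_A\le n)$ via shift invariance; this enlarged event is measurable on coordinate blocks separated by a gap of at least $n$, so $\alpha$-mixing decouples them up to $\alpha(n)$; finally, $|g(s+n+t)-g(s+t)|\le\mu(\tau_A\le n)$ restores the shift.

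\textbf{Choice of reference scale and definition of $\lambda(A)$.} One may assume $\delta<1/144$, otherwise the inequality is trivial since its left-hand side is bounded by $1\le 12\sqrt{\delta}$. Let
\[
t^*:=\min\{t\ge 1\colon g(t)\le 1-\sqrt{\delta}\},
\]
which exists and satisfies $t^*>n$ because $g(n)\ge 1-\mu(\tau_A\le n)>1-\sqrt{\delta}$; then set $\lambda(A):=-\log g(t^*)/(\mu(A) t^*)$, so that the candidate exponential $e^{-\lambda(A)\mu(A)k}$ is exact at $k=t^*$. Positivity $\lambda(A)>0$ follows from $g(t^*)<1$, and the bound $\lambda(A)\le 2$ is obtained by combining the elementary inequality $-\log(1-x)\le 2x$ on $[0,1/2]$ with the two facts $1-g(t^*)\le \sqrt{\delta}+\mu(A)$ (from the minimality of $t^*$ and the one-step drop $g(t^*-1)-g(t^*)\le\mu(A)$) and $t^*\mu(A)\ge \mu(\tau_A\le t^*)\ge \sqrt{\delta}$.

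\textbf{Iteration, interpolation, and main obstacle.} Setting $e_m:=|g(mt^*)-g(t^*)^m|$, the quasi-multiplicative bound gives the recursion $e_{m+1}\le g(t^*)e_m+\delta$ with $e_0=0$, hence $e_m\le \delta/(1-g(t^*))\le \sqrt{\delta}$. For arbitrary $k=mt^*+r$ with $0\le r<t^*$, one more use of quasi-multiplicativity yields $|g(k)-g(t^*)^m g(r)|\le 2\sqrt{\delta}$, and since $e^{-\lambda(A)\mu(A)k}=g(t^*)^m g(t^*)^{r/t^*}$, with both $g(r)$ and $g(t^*)^{r/t^*}$ in $[g(t^*),1]$, their difference is at most $1-g(t^*)=O(\sqrt{\delta})$. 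Summing the contributions and keeping track of the constants yields the announced bound $12\sqrt{\delta}$. The \emph{key technical balance} lies in the level $1-g(t^*)\sim\sqrt{\delta}$: at a finer scale the iteration error $\delta/(1-g(t^*))$ blows up, while at a coarser scale the residual sub-block leaves $g(r)$ free to deviate from the interpolant $g(t^*)^{r/t^*}$ by the full drop $1-g(t^*)$. This square-root trade-off is the heart of the argument and the only place where both sources of error interact.
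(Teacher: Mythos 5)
Your proposal follows essentially the same route as the paper's proof: an approximate multiplicativity of $k\mapsto\mu(\tau_A>k)$ obtained by inserting a buffer and invoking $\alpha$-mixing, a reference scale chosen as the first time at which $\mu(\tau_A\le t^*)$ exceeds $\sqrt{\delta}$ (this is exactly the role of Lemma~\ref{lem:scale}), the definition of $\lambda(A)$ so that the exponential is exact at that scale, and then a geometric-series iteration plus an interpolation step, each contributing $O(\sqrt{\delta})$; your bound $\lambda(A)\le -\log(1-u)/u\le 2$ with $u=\mu(\tau_A\le t^*)\le t^*\mu(A)$ is also the paper's argument. The one genuine slip is in the mixing step: since $A\in\F_0^{n-1}$, the event $\{\tau_A>s\}$ depends on coordinates $1,\ldots,s+n-1$, so after inserting a buffer of length $n$ the two blocks are separated by a coordinate gap of about $2$, which only yields $\alpha(2)$, not $\alpha(n)$; you need a buffer of length $2n$, which is precisely why the paper compares with $\{\tau^{[(j-1)s+2n]}>s-2n\}$. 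The fix merely replaces $\mu(\tau_A\le n)$ by $\mu(\tau_A\le 2n)\le 2\mu(\tau_A\le n)$ in your quasi-multiplicativity constant, and since your accounting ends at roughly $4\sqrt{\delta}$ the slack in the constant $12$ absorbs the extra factor, so the argument goes through. A cosmetic difference worth noting: the paper never restores the shift and works directly with $H(s-2n)^q$ as the approximant, whereas you restore it to get a genuine two-sided $|g(s+t)-g(s)g(t)|\le\delta$; both choices lead to the same estimates.
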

The value of the upper bound is not intented to be optimal, but is just there to emphasize that it does not depend on the particular choice of the set $A\in\F_0^{n-1}$ but only on the probability of short hitting times $\mu(\tau_{A}\le n)$.

In the proof of the theorem we make use of the following lemma.
\begin{lemma} \label{lem:scale}
Let $n$ be an integer. For any $A\in\F_0^{n-1}$ such that 
\[
\delta := 3\sqrt{2\mu(\tau_{A}\le n)+\alpha(n)}<1/4,
\]
there exist an integer $s> 2n$ such that 
\begin{equation}\label{eq:sn}
\mu(\tau_{A}\le s)\le\delta  \qquad {\rm and} \qquad 
\frac{\mu(\tau_{A}\le 2n)+\alpha(n) }{ \mu(\tau_{A}\le s-2n) } \le \delta.
\end{equation}
\end{lemma}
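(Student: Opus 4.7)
The plan is to take $s$ to be the largest integer with $\mu(\tau_A\le s)\le \delta$. Since $\alpha$-mixing implies ergodicity, $\mu(\tau_A\le k)\to 1$ as $k\to\infty$, so this maximum exists. With this choice the first inequality of \eqref{eq:sn} holds by definition, and the task reduces to verifying $s>2n$ and the second inequality.

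The two elementary tools I will lean on are the subadditivity
\[
\mu(\tau_A\le a+b)\le \mu(\tau_A\le a)+\mu(\tau_A\le b),
\]
which follows from the inclusion $\{a<\tau_A\le a+b\}\subset T^{-a}\{\tau_A\le b\}$ and the $T$-invariance of $\mu$, and the single-step bound $\mu(\tau_A=k)\le \mu(A)\le \mu(\tau_A\le n)$. From the definition of $\delta$ I extract the basic inequalities $\mu(\tau_A\le n)\le \delta^2/18$ and $\alpha(n)\le \delta^2/9$. Subadditivity then gives $\mu(\tau_A\le 3n)\le 3\mu(\tau_A\le n)\le \delta^2/6$, which is less than $\delta$ since $\delta<1/4$. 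This forces $s\ge 3n>2n$.

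For the second inequality of \eqref{eq:sn}, I exploit the maximality of $s$ via $\mu(\tau_A\le s+1)>\delta$. Writing $s+1=(s-2n)+(2n+1)$, the subadditivity and single-step estimates together yield
\[
\mu(\tau_A\le s+1)\le \mu(\tau_A\le s-2n)+\mu(\tau_A\le 2n)+\mu(A),
\]
and the last two terms are controlled by $\delta^2/9+\delta^2/18=\delta^2/6$. So $\mu(\tau_A\le s-2n)>\delta-\delta^2/6\ge 23\delta/24$. Since the numerator $\mu(\tau_A\le 2n)+\alpha(n)$ is bounded by $2\delta^2/9$, the resulting ratio is at most a small constant multiple of $\delta$, in particular strictly less than $\delta$.

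The argument is essentially arithmetic once $s$ is chosen correctly. The one conceptual step is picking $s$ as the largest level below $\delta$, which lets both clauses of \eqref{eq:sn} be handled by the same subadditivity machinery; no real obstacle is expected beyond tracking constants, and the $\alpha$-mixing assumption is used only through ergodicity to guarantee the existence of $s$.
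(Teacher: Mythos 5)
Your proof is correct and rests on the same mechanism as the paper's: a threshold argument on the monotone sequence $k\mapsto\mu(\tau_A\le k)$ combined with the subadditivity $\mu(\tau_A\le a+b)\le\mu(\tau_A\le a)+\mu(\tau_A\le b)$ coming from invariance (both proofs also implicitly use ergodicity and $\mu(A)>0$ to guarantee the threshold is crossed). The only, immaterial, difference is that you take $s$ maximal with $\mu(\tau_A\le s)\le\delta$ and deduce the second inequality of \eqref{eq:sn} from maximality, whereas the paper takes $s$ minimal with $\mu(\tau_A\le s-2n)\ge\sqrt{d}$, where $d=(\delta/3)^2$, and deduces the first inequality from minimality; both choices work and your constants check out.
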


\begin{proof} 
Let us define $d=2\mu(\tau_{A}\le n)+\alpha(n)$. By hypothesis $d<1/144$.
By invariance we have 
\[
\mu(\tau_{A}\le 2n)+\alpha(n)\le d.
\]
Let $s> 2n$ denotes the smallest integer such that 
\[
 \mu(\tau_{A} \le s-2n)\ge\sqrt{d}.
\]
With this choice we have
\[
\frac{\mu(\tau_{A}\le 2n)+\alpha(n) }{ \mu(\tau_{A}\le s-2n) } \le \sqrt{d}.
\]
Furthermore, since $\mu(\tau_{A} \le s-2n-1)<\sqrt{d}$, it follows from the invariance that
\[
\mu(\tau_{A}\le s)\le \mu(\tau_{A}\le s-2n-1)+\mu(\tau_{A}\le 2n+1)\le \sqrt{d}+2d.
\]
\end{proof}

\begin{proof}[Proof of Theorem~\ref{pro:main}]
Let $n$ be an integer and $A\in\F_0^{n-1}$. 
Let $\delta$ be as in Lemma~\ref{lem:scale}. There is nothing to prove if $\delta\ge 1/4$ so we suppose that $\delta<1/4$.
Take $s>2n$ given by Lemma~\ref{lem:scale} such that~\eqref{eq:sn} holds. 

To simplify notation we drop the subscript $A$ and write $\tau=\tau_A$. Set $H(k)=\mu(\tau>k)$,
and denote by $\tau^{[t]}=\tau\circ T^{t}$ the first occurrence time starting at time $t$.
For any integer $j\ge 1$ consider the modulus
\begin{equation}\label{eq:HA}
| H( j s ) - H( (j-1)s ) H( s - 2n ) | .
\end{equation}
The sets 
\[
\{\tau>js\}=\{\tau>(j-1)s\}\cap\{\tau^{[(j-1)s]}>s\}
\]
and 
\[
\{\tau>(j-1)s\}\cap\{\tau^{[(j-1)s+2n]}>s-2n\}
\] 
differ by a subset of $\{\tau^{[(j-1)s]}\le 2n\}$ whose measure is by invariance bounded by $\mu(\tau\le 2n)$.
Furthermore, by mixing we get that
\[
|\mu(\{\tau>(j-1)s ; \tau^{[(j-1)s+2n]}>s-2n\}) - H( (j-1)s ) H( s - 2n )|\le \alpha(n).
\]
Thus the above expression~\eqref{eq:HA} is bounded by
\[
\mu(\tau\le 2n) + \alpha(n)
\]

Now, take $q$ a positive integer.
The absolute value 
\begin{equation} \label{integers}
| H( qs  ) -  H(s-2n)^q  |
\end{equation}
is bounded by
\[
\sum_{j=1}^{q}
| H(j s  ) - H( (j-1)s  ) 
  H( s- 2n  ) | H( s- 2n  ) ^{q-j} .
\]
We just proved that the modulus in the above sum is bounded by $\mu(\tau\le 2n)+\alpha(n)$.
Summing over $j$ we get that for all integer $k\ge 1$ the modulus in (\ref{integers}) is bounded by
\[
\frac{ \mu(\tau \le 2n)+\alpha(n)  }{\mu(\tau\le s-2n) } \le\delta.
\]

Moreover,  any non-negative integer $k$  can be written as $q s+r$ with $q=[k/s]$ and $0\le r < s$.
Then
\begin{equation}\label{eq:ketqs}
|H(k)-H(q s)|=\mu(\tau>q s ; \tau^{[q s]} \le r   ) ,
\end{equation}
which, by invariance, is bounded by $\mu(\tau\le s)\le\delta$.

To finish the proof, set
\[
\lambda(A) = - \frac{\ln H( s-2n)}{s\mu(A)},
\]
and note that
the Mean  Value Theorem gives
\begin{equation}\label{eq:ksurs}
|H( s-2n)^{[k/s]}-H( s-2n)^{k/s}| \le 
-\ln H(s-2n).
\end{equation}
Note that 
$H( s-2n)^{k/s}=e^{-\lambda(A)\mu(A) k } $. By convexity we have $-\ln(1-u)\le u / (1-\delta)$ whenever $0\le u\le \delta$, therefore
\[
-\ln H(s-2n) \le \frac{1}{1-\delta}\mu(\tau \le s-2n)\le\frac{1}{1-\delta}\mu(\tau\le s)\le2\delta.
\]
Putting together the three estimates for \eqref{integers}, \eqref{eq:ketqs} and \eqref{eq:ksurs} gives the conclusion.
Observe in addition that $\lambda(A)\le 1/(1-\delta)\le2$ since $\mu(\tau\le s)\le s\mu(A)$.
\end{proof}
\begin{remark}\label{rem:to1}
The upper bound $\lambda(A)\le2$ can be sharpened when $\delta$ is small. In particular  if $\delta_n\to0$ as $n\to\infty$ we get $\limsup\lambda(A_n)\le 1$.
\end{remark}

We conclude this section with the proof of the main theorem.
In view of Theorem~\ref{thm:FandG}, the statement for hitting times in the main theorem (Theorem~\ref{hitting}) and the one for return times are equivalent, hence it is sufficient to prove the first statement with $F(t)=1-e^{-t}$, which will imply the second statement with $G(s)=e^{-s}$.

\begin{proof}[Proof of Theorem~\ref{hitting}]
For any real $t>0$, taking $k=\lfloor t/\mu(A_n)\rfloor$ in Theorem~\ref{pro:main} gives
\[
\left|\mu(\lambda(A_n)\mu(A_n)\tau_{A_n}>t)-e^{-t}\right|\le 12\sqrt{2\mu(\tau_{A_n}\le n)+\alpha(n)}+2\mu(A_n),
\]
which proves the first statement. The uniform convergence in \eqref{eq:tauzero} implies that of this upper bound, since
\[
\mu(A_n)=\mu(\tau_{A_n}=1)\le\mu(\tau_{A_n}\le n).
\]
The second statement follows from Theorem~\ref{thm:FandG}.
The third statement follows from Remark~\ref{rem:to1}.
\end{proof}

\section{Hitting and returning: an adaptation of haydn-Lacroix-Vaienti theorem}

Haydn, Lacroix and Vaienti \cite{hlv} have prove that the asymptotic distribution of hitting and return times $\tau_{A_n}$, rescaled by the measure $\mu(A_n)$ are related by an integral equation. Their result does not apply to our setting because the asymptotic distribution does not exist in general, because the normalizing constant does not converge in general. 

We now give the generalization of their result adapted to our case, which deserves a new proof since the technique needs to be relatively different.
Let 
\[
\begin{split}
F_A(t)&=\mu(\lambda(A)\mu(A)\tau_A \le t),\\
G_A(s)&=\frac{1}{\lambda(A)}\mu(\lambda(A)\mu(A)\tau_A> s|A).
\end{split}
\]
$F_A$ is the usual non-decrasing cumulative distribution function of the rescaled hitting time  $\lambda(A)\mu(A)\tau_A$ while $G_A$ is a normalized non-increasing distribution function of the rescaled return time $\lambda(A)\mu(A)\tau_A$.
We recall that since $F_A$ and $G_A$ are monotonous, their convergence when $\mu(A)\to0$ on a dense set or on all but countably many points are equivalent and we will simply say that they converge.

\begin{theorem}\label{thm:FandG}
Suppose that the measure preserving system $(\Sigma,T,\mu)$ is ergodic.

Let $A_n$ be a sequence of measurable sets such that $\mu(A_n)\to0$.
If $F_{A_n}$ converges to $F$ as $n\to\infty$ then $G_{A_n}$ converges to some function $G$, and the limits are related by the integral equation
\[
F(t) = F(0+) + \int_0^tG(s)ds\quad (\forall t>0).
\]
In particular, if the solution $G$ is continuous then the convergence is uniform on $[s,+\infty)$ for any $s>0$.

Reciprocally, if $G_{A_n}$ converges to $G$ as $n\to\infty$ and $\int_0^\infty G(s)ds=1$ then $F_{A_n}$ converges to some function $F$, and the limits are related by the same integral relation with $F(0+)=0$.
In particular, $F$ is continuous on $[0,\infty)$ and the convergence is uniform.
\end{theorem}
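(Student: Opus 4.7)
The approach is to derive an explicit Kac-type identity relating $F_A$ and $G_A$ at finite $n$, with an error term vanishing as $\mu(A_n)\to 0$, and then pass to the limit. The Kakutani tower decomposition $\Sigma=\bigsqcup_{j\ge1}\bigsqcup_{0\le\ell<j}T^\ell(A\cap\{\tau_A=j\})$ yields, after invariance, the identity
\[
\mu(\tau_A>k)=\mu(A)\sum_{i=k}^{\infty}\mu(\tau_A>i\mid A),\qquad k\in\NN.
\]
Setting $\nu:=\lambda(A)\mu(A)$ and $k=\lfloor t/\nu\rfloor$, and using the definition $\mu(\tau_A>i\mid A)=\lambda(A)G_A(i\nu)$, this recasts $1-F_A(t)$ as a Riemann sum of step $\nu$ for $\int_t^\infty G_A$. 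Since $G_A$ is non-increasing with $G_A(0)=1/\lambda(A)$, the standard sum-integral comparison then produces
\[
\left|1-F_A(t)-\int_t^{\infty}G_A(u)\,du\right|\le 2\mu(A),
\]
uniformly in $t\ge 0$.

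For the first implication, assume $F_{A_n}\to F$. Taking $t=0$ in the estimate above gives $\int_0^\infty G_{A_n}\le 1+2\mu(A_n)$, which combined with monotonicity yields the universal pointwise bound $G_{A_n}(u)\le (1+2\mu(A_n))/u$ on $(0,\infty)$. Helly's selection, together with a diagonal argument on $\varepsilon\downarrow 0$, extracts a subsequence $G_{A_{n_k}}$ converging pointwise to some non-increasing function $G$ at its continuity points. For continuity points $0<t<T$ of $F$, the uniform bound on $[t,T]$ justifies dominated convergence there, while the tail is handled by using the identity at $T$, namely $\int_T^\infty G_{A_{n_k}}=1-F_{A_{n_k}}(T)+O(\mu(A_{n_k}))$. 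Combining these produces the relation $F(T)-F(t)=\int_t^T G$ in the limit, and letting $t\to 0+$ through continuity points gives the stated integral equation. Any two subsequential limits satisfy the same equation and so agree almost everywhere by Lebesgue differentiation; being monotone, they agree at their continuity points, so the full sequence $G_{A_n}$ converges.

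The reciprocal direction is analogous: $(F_{A_n})$ is tight, Helly extracts a subsequential limit $F$, and applying the first direction along this subsequence shows that $F$ satisfies $F(t)=F(0+)+\int_0^t G$ with the given $G$. Letting $t\to\infty$, the hypothesis $\int_0^\infty G=1$ together with $F(\infty)\le 1$ and $F(0+)\ge 0$ forces $F(0+)=0$ and $F(\infty)=1$; this determines $F$ uniquely and gives convergence of the full sequence. Continuity of $F$ on $[0,\infty)$ is then immediate from the integral representation, and uniform convergence of a sequence of monotone functions to a continuous limit is standard. The main technical obstacle is that the pointwise bound $G_{A_n}(u)\le C/u$ is not integrable at infinity, so dominated convergence cannot be applied directly to the whole half-line; the Kac-type identity itself circumvents this by expressing the tail integral in terms of $F_{A_n}(T)$, whose limit is accessible. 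A secondary subtlety in the reciprocal direction is ruling out a Dirac mass escaping to zero, which is precisely what the normalization $\int_0^\infty G=1$ prevents.
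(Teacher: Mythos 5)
Your proof is correct and follows essentially the same route as the paper: the same Kac-type finite-$n$ comparison between $F_A$ and the integral of $G_A$ with an $O(\mu(A))$ error, followed by Helly selection, dominated convergence on compact intervals using the bound $G_A(u)\lesssim 1/u$, and uniqueness of monotone a.e.\ limits. The only cosmetic differences are that you state the identity in tail form $1-F_A(t)\approx\int_t^\infty G_A$ rather than in increment form, and in the reciprocal direction you run Helly on $(F_{A_n})$ and invoke the first direction, where the paper squeezes the full sequence directly with a two-sided Fatou argument.
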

\begin{proof}
Let $A$ be any measurable set with $\mu(A)>0$.
Note that $0\le F_A(t)\le 1$ and $0\le G_A(s)\le 1/s$ for any $s>0$, where
this last upper bound follows from Markov inequality and Kac's Lemma:
\[
G_A(s) = \frac{1}{\lambda(A)}\mu(\lambda(A)\mu(A)\tau_A>s|A)\le \frac{\mu(A)}{s}\int\tau_A d\mu(\cdot|A)\le\frac{1}{s}.
\]
First observe that by invariance one has for every integer $n$
\[
\mu(\tau_A=n)=\mu(A\cap \{\tau_A\ge n\}).
\]
Therefore
\[
\begin{split}
F_A(t)&=\sum_{n=1}^{t/\lambda(A)\mu(A)}\mu(A\cap \{\tau_A\ge n\})\\
&=\int_0^{\lfloor t/\lambda(A)\mu(A)\rfloor}\mu(A\cap \{\tau_A> r\})dr.
\end{split}
\]
Since $\mu(A\cap\{\tau_A>r\})\le\mu(A)$ we get by a change of variable
\[
F_A(t)\le \int_0^t G_A(s)ds \le F_A(t)+\mu(A).
\]
For any $0<t<t'$ we get the relation
\begin{equation}\label{eq:FAGA}
\int_t^{t'}G_A(s)ds -\mu(A) \le F_A(t')-F_A(t)\le \int_t^{t'}G_A(s)ds +\mu(A).
\end{equation}

$\bullet$
Assume that $F_{A_n}$ converges to some function $F$ and suppose for a contradiction that $G_{A_n}$ does not converge.
By Helly's selection principle, each subsequence of function must have an accumulation point\footnote{Indeed, the space of decreasing functions $g$ from $(0,\infty)$ to itself such that $g(s)\le s$, under the equivalence relation of equality outside countable sets, is metrizable (e.g. a slight modification of the Levy metric) and compact (Helly selection principle) and an accumulation point refers to this notion of convergence.}. Therefore $G_{A_n}$ must have at least two different accumulation points $G_1$ and $G_2$.
By dominated convergence~\eqref{eq:FAGA} gives that for all $0<t<t'$
\begin{equation}\label{eq:FGi}
F(t')-F(t)=\int_t^{t'}G_i(s)ds\quad(i=1,2)
\end{equation}
Hence $G_1=G_2$ a.e., a contradiction; thus $G_{A_n}$ converges.
Lastly, the integral relation follows from~\eqref{eq:FGi} by monotone convergence.

$\bullet$
Assume that $G_{A_n}$ converges to some function $G$. 
By Fatou's lemma, the left-most inequality in~\eqref{eq:FAGA} gives that for all $t>0$
\[\label{eq:Finf}
\int_0^{t}G(s)ds\le\liminf_{n\to\infty} F_{A_n}(t)
\quad;\quad
\int_t^\infty G(s)ds\le\liminf_{n\to\infty} \left( 1-F_{A_n}(t)\right).
\]
therefore under our assumption on the limit $G$, $F_{A_n}$ converges to $F$ and 
\[
F(t)=\int_0^tG(s)ds.
\]
\end{proof}

\end{document}